\algrenewcommand\textproc{\textsc}
\title{A High-Performant Multi-Parametric Quadratic Programming Solver}
\author{Daniel Arnström, Daniel Axehill%
\thanks{D. Arnstr\"om is with the Division of Systems and Control, Depratment of Information Technology, Uppsala University, Sweden 
{\tt\small daniel.arnstrom@it.uu.se}}%
\thanks{D. Axehill rs with the Division of Automatic Control, Link\"oping University,
  Sweden {\tt\small daniel.axehill@liu.se}}%
}
\begin{document}
\renewcommand{\baselinestretch}{1.0}

\definecolor{set19c1}{HTML}{E41A1C}
\definecolor{set19c2}{HTML}{377EB8}
\definecolor{set19c3}{HTML}{4DAF4A}
\definecolor{set19c4}{HTML}{984EA3}
\definecolor{set19c5}{HTML}{FF7F00}
\definecolor{set19c6}{HTML}{FFFF33}
\definecolor{set19c7}{HTML}{A65628}
\definecolor{set19c8}{HTML}{F781BF}
\definecolor{set19c9}{HTML}{999999}

\maketitle
\thispagestyle{empty}
\pagestyle{empty}
\newtheorem{proposition}{Proposition}
\newtheorem{lemma}{Lemma}
\newtheorem{corollary}{Corollary}
\newtheorem{remark}{Remark}
\newtheorem{theorem}{Theorem}
\newtheorem{definition}{Definition}
\newtheorem{assumption}{Assumption}
\newtheorem{example}{Example}
\newtheorem{problem}{Problem}

\begin{abstract}
    We propose a combinatorial method for computing explicit solutions to multi-parametric quadratic programs, which can be used to compute explicit control laws for linear model predictive control. In contrast to classical methods, which are based on geometrical adjacency, the proposed method is based on combinatorial adjacency. After introducing the notion of combinatorial adjacency, we show that the explicit solution forms a connected graph in terms of it. We then leverage this connectedness to propose an algorithm that computes the explicit solution. The purely combinatorial nature of the algorithm leads to computational advantages since it enables demanding geometrical operations (such as computing facets of polytopes) to be avoided. Compared with classical combinatorial methods, the proposed method requires fewer combinations to be considered by exploiting combinatorial connectedness. We show that an implementation of the proposed method can yield a speedup of about two orders of magnitude compared with state-of-the-art software packages such as \texttt{MPT} and \texttt{POP}. 
\end{abstract}

\section{Introduction}
In Model Predictive Control (MPC), a control action is determined at each time step by solving an optimization problem \cite{rawlings2017model}.
When the dynamics of the system to be controlled is linear, the optimization problems in question can be cast as instances of a multi-parametric quadratic program (mpQP) of the form  
\begin{equation}
  \begin{aligned}
      \label{eq:rmpc-mpqp}
	   &\underset{x}{\text{minimize}}&& \frac{1}{2}x^T H x + f(\theta)^T x \\
	   &\text{subject to} && A x \leq  b(\theta), \\
  \end{aligned}
\end{equation}
where the decision variable $x \in \mathbb{R}^n$ is related to the control action, and the parameter $\theta \in \Theta_0 \subseteq \mathbb{R}^p$ is related to setpoints and the system state. The parameter set $\Theta_0$ is assumed to be a polyhedron. For a given linear (and time-invariant) MPC application, the Hessian $H \succ 0$ and the constraint matrix $A \in \mathbb{R}^{m \times n}$ are \textit{constant}. Moreover,  both the linear cost $f: \mathbb{R}^p \to \mathbb{R}^n$ and the constraint offset $b:\mathbb{R}^p \to \mathbb{R}^m$ are affine functions of $\theta$ \cite{borrelli2017predictive}.
The particular structure of \eqref{eq:rmpc-mpqp} allows for a closed-form solution $x^*(\theta)$ that is piecewise affine over polyhedral regions. This closed-form, or \textit{explicit}, solution is used in \textit{explicit MPC}, where the control law is implemented as a simple lookup table \cite{bemporad2002explicit}.  

Albeit straightforward to theoretically derive the explicit solution, it is not as straightforward to compute the corresponding polyhedral regions efficiently and reliably. As a result, several methods for computing the explicit solution have been developed, which generally fall into two categories: geometrical \cite{bemporad2002explicit,tondel2003algorithm,grieder2004computation,jones2006multiparametric} and combinatorial \cite{gupta2011novel,feller2013improved,oberdieck2017explicit,ahmadi2018combinatorial,herceg2015enumeration}; an alternative approach for mpQPs that specifically originate from MPC, which is based on dynamic programming, has been proposed in \cite{mitze2020dynamic}. State-of-the-art software packages that can compute the explicit solution to \eqref{eq:rmpc-mpqp} include \texttt{MPT}~\cite{herceg2013mpt}, \texttt{POP}~\cite{oberdieck2016pop}, and the \texttt{Hybrid Toolbox}~\cite{hyb2004bemporad}.

The main contribution of this paper is a combinatorial method that efficiently computes the explicit solution of~\eqref{eq:rmpc-mpqp}. The method is based on exploring a connected graph, similar to \cite{oberdieck2017explicit} and \cite{ahmadi2018combinatorial}, to tame the combinatorial nature of computing the explicit solution. In contrast to \cite{oberdieck2017explicit}, the method does not rely on any geometrical operations such as computing the facets of polytopes, which makes the resulting method more efficient and reliable. In contrast to \cite{ahmadi2018combinatorial}, the proposed method handles degeneracies in a more straightforward manner; the proposed method does not, for example, need to explicitly check if constraints are weakly active/inactive. The method is also related to the complexity-certification method in \cite{arnstrom2022unifying}, which produces the explicit solution as a byproduct.

Concretely the main contributions of the paper are:
\begin{itemize}
    \item[(i)] Proving that the explicit solution to an mpQP form a connected graph in a combinatorial sense (Theorem \ref{th:main}).
    \item[(ii)] A combinatorial mpQP method that builds on exploring combinatorial adjacent active sets (Algorithm \ref{alg:updown}).
    \item[(iii)] An efficient implementation of the proposed method that is often several orders of magnitude faster than state-of-the-art software (Section \ref{sec:result}). 
\end{itemize}
The rest of the paper is organized as follows: In Section~\ref{sec:prelim} we describe how a multi-parametric least-distance problem (mpLDP) can be consider instead of the mpQP in \eqref{eq:rmpc-mpqp}. We then derive the explicit solution to this mpLDP and formalize a combinatorial problem for computing it. The section ends with a brief review of existing methods for computing the explicit solution. In Section~\ref{sec:algorithm} we introduce the concept of geometrical and combinatorial adjacency of active sets, and show that any pair of optimal active sets are connected by a sequence combinatorially adjacent acitve sets. We then leverage this connectedness to propose an algorithm that efficiently computes the explicit solution. In Section~\ref{sec:result} we show that an implementation of the proposed algorithm is about two orders of magnitude faster than the state-of-the-art mpQP solvers implemented in \texttt{MPT} \cite{herceg2013mpt} and \texttt{POP} \cite{oberdieck2016pop}.
\subsubsection*{Notation}
Subscript denotes indexing of the element/rows of vectors/matrices. For example, $v_i$ denotes the $i$th element of the vector $v$, and $M_{\mathcal{I}}$ denotes a submatrix of the matrix $M$ that is indexed by the set $\mathcal{I}$.  The complement of an index set $\mathcal{I}$ is denoted $\bar{\mathcal{I}}$ and its cardinality is denoted $|\mathcal{I}|$. The set of all integers between $1$ and $m$ is denoted $\mathbb{Z}_{1:m}$.
\section{Preliminaries}
\label{sec:prelim}
In this section we first transform \eqref{eq:rmpc-mpqp} into a multi-parametric least-distance problem (mpLDP), which simplifies the exposition,  and also improves computational aspects of the proposed algorithm. We then state the KKT conditions for this mpLDP and use them to characterize the explicit solution. Finally, we formalize the problem of computing the explicit solution (Problem \ref{prob:main}), and give a brief overview of existing methods for solving it. 
\subsection{Equivalent least-distance problem}

To simplify notation and reduce computations in the proposed algorithm, we first transform the mpQP in \eqref{eq:rmpc-mpqp} into the equivalent multi-parametric least-distance problem (mpLDP) of the form
\begin{equation}
    \label{eq:mpLDP}
  \begin{aligned}
	&\underset{u}{\text{minimize}}&&\frac{1}{2}\|u\|^2_2\\
	&\text{subject to} && M u \leq d(\theta),
  \end{aligned}
\end{equation}
by using the transformation $u = R(x+R^{-T}f(\theta))$, where $R$ is an upper Cholesky factor of $H$; the problem data in \eqref{eq:mpLDP} is, accordingly, defined as 
\begin{equation}
  \label{eq:aux-def}
  M \triangleq A R^{-1}, \qquad d(\theta)\triangleq b(\theta)+M R^{-T} f(\theta).
\end{equation}
The solution $x^*(\theta)$ to \eqref{eq:rmpc-mpqp} can be retrieved from the solution $u^*(\theta)$ to \eqref{eq:mpLDP} as 
\begin{equation}
    \label{eq:sol-transform}
    x^*(\theta) = R^{-1} \left(u^*(\theta)-R^{-T}f(\theta)\right).
\end{equation}

Importantly, we have that the affine structure of the constraint offset is retained, which we formalize in the following lemma.
\begin{lemma}[Affine offset]
    The offset $d: \mathbb{R}^p \to \mathbb{R}^m$ is an affine function of $\theta$.  
\end{lemma}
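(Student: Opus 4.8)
The plan is simply to unwind the definition $d(\theta) \triangleq b(\theta) + M R^{-T} f(\theta)$ from \eqref{eq:aux-def} and observe that every operation appearing in it preserves affinity. First I would record the two pieces of given structure: by the standing assumption that $f$ and $b$ are affine, we may write $f(\theta) = F\theta + f_0$ and $b(\theta) = B\theta + b_0$ for constant $F \in \mathbb{R}^{n \times p}$, $B \in \mathbb{R}^{m \times p}$, $f_0 \in \mathbb{R}^n$, $b_0 \in \mathbb{R}^m$. Since $H \succ 0$, its upper Cholesky factor $R$ is invertible, so $M R^{-T} = A R^{-1} R^{-T}$ is a well-defined constant matrix in $\mathbb{R}^{m \times n}$; denote it by $G$.

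Next I would substitute these expressions into the definition of $d$ to obtain $d(\theta) = B\theta + b_0 + G(F\theta + f_0) = (B + GF)\theta + (b_0 + Gf_0)$. Because $B + GF$ is a constant matrix and $b_0 + Gf_0$ is a constant vector, this display exhibits $d$ as an affine function of $\theta$, which is exactly the claim. Alternatively, one can phrase the argument structurally: pre-composition of an affine map with the linear map $v \mapsto Gv$ is affine, and the sum of two affine maps is affine, so $d = b + G\circ f$ is affine.

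I do not expect any genuine obstacle here; the only points that warrant a word of care are that $R$ (hence $R^{-T}$) is invertible — immediate from $H \succ 0$ — and that the dimensions line up so that $M R^{-T} f(\theta) \in \mathbb{R}^m$, matching the right-hand side of \eqref{eq:mpLDP}. The role of the lemma downstream is to guarantee that the transformed problem \eqref{eq:mpLDP} is a bona fide instance of the same mpQP class as \eqref{eq:rmpc-mpqp}, so that the subsequent analysis of the explicit solution applies verbatim to the mpLDP.
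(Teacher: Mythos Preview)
Your argument is correct and is essentially the paper's own proof made explicit: the paper simply observes that $d$ is a linear transformation of the affine maps $b$ and $f$, and that affine maps are preserved under linear transformations. Your alternative structural phrasing at the end ($d = b + G\circ f$) is exactly that observation.
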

\begin{proof}
    From \eqref{eq:aux-def}, the offset $d$ is a linear transformation of $b$ and $f$, which are both affine functions of $\theta$. Since affine functions are preserved under linear transformations, $d$ is also an affine function of $\theta$.  
\end{proof}

\begin{remark}[Relating mpLDP and mpQP]
    Since $d(\theta)$ is affine, the LDP in \eqref{eq:mpLDP} is a special case of an mpQP of the form \eqref{eq:rmpc-mpqp}. Hence, all results for mpQPs directly translate to \eqref{eq:mpLDP}; for example, that the solution is piecewise affine over a polyhedral partition. This is also evident from the simple affine relationship between $x^*$ and $u^*$ in~\eqref{eq:sol-transform}. 
\end{remark}

\subsection{The explicit solution}
Necessary and sufficient conditions for a solution $u^*$ to the mpLDP in \eqref{eq:mpLDP} are the KKT-conditions
\begin{subequations}
    \label{eq:kkt}
  \begin{align}
      u^* + M^T \lambda &= 0, \\ 
      M u^* &\leq d(\theta), \\
      \lambda &\geq 0, \\
      [d(\theta)-M u^*]_i [\lambda]_i &= 0, \quad \forall i \in \mathbb{Z}_{1:m} \label{eq:kkt-compslack},
  \end{align}
\end{subequations}
with the dual variable $\lambda \in \mathbb{R}^m$. Both $u^*(\theta)$ and $\lambda(\theta)$ are functions of the parameter $\theta$, although we will often skip writing out this parameter dependence explicitly and use the notation $u^*$ and $\lambda$.

The main complication with using the KKT-conditions in \eqref{eq:kkt} to find a solution is the complementary slackness condition in \eqref{eq:kkt-compslack}, which make solving \eqref{eq:mpLDP} a combinatorial problem. To make the combinatorial aspect of solving \eqref{eq:mpLDP} more explicit, we introduce the notion of an active set.  

\begin{definition}[Active set]
    An index set $\mathcal{A} \subseteq \mathbb{Z}_{1:m}$ is an \textit{active set} to the mpLDP in \eqref{eq:mpLDP} if the equality constraints $M_i u = d_i(\theta)$ for all $i\in \mathcal{A}$ and $\lambda_j = 0$ for all $i\notin \mathcal{A}$ are imposed in the KKT conditions in \eqref{eq:kkt}.
\end{definition}

In other words, an active set forces all constraints in it to hold with equality (inequality constraints that holds with equality are said to be active, hence the name active set.) 

For a given active set $\mathcal{A}$, the KKT conditions reads  
\begin{subequations}
    \label{eq:kkt-AS}
  \begin{align}
      u^* + \sum_{i \in \mathcal{A}} M^T_i \lambda_i  = 0, \label{eq:stat-AS}&& \\ 
      M_{{\mathcal{A}}} u^* = d_{\mathcal{A}}(\theta),\quad \:\: \lambda_{\mathcal{A}} \geq 0, \label{eq:feas-AS} && \\
      M_{\bar{\mathcal{A}}} u^* \leq d_{\bar{\mathcal{A}}}(\theta), \quad \:\: \lambda_{\bar{\mathcal{A}}} = 0,&&
  \end{align}
\end{subequations}
which, in contrast to \eqref{eq:kkt}, is a system of linear equality and inequality constraints.

To form an explicit solution to \eqref{eq:mpLDP}, we are interested in parameters for which a given active set $\mathcal{A}$ leads to a solvable system \eqref{eq:kkt-AS}. The set of all such parameters for a given active set is known as a \textit{critical region}:
\begin{definition}[Critical region]
    The critical region $\Theta_{\mathcal{A}}$ for a given active set $\mathcal{A}$ to \eqref{eq:mpLDP} is defined as the set 
    \begin{equation*}
        \label{eq:CR-implicit}
    \Theta_{\mathcal{A}} \triangleq \{\theta\in \Theta_0 : \exists (u^*,\lambda) \in \mathbb{R}^n \times \mathbb{R}^m \text{ that satisfy \eqref{eq:kkt-AS}}\}.
    \end{equation*}
\end{definition}

This definition of a critical region is implicit. If we assume that the matrix $M_{\mathcal{A}}$ has full row rank, formalized below, it is possible to give an explicit expression of $\Theta_{\mathcal{A}}$.

\begin{definition}[LICQ]
    The \textit{linear independence constraint qualification} (LICQ) is satisfied for an active set $\mathcal{A}$ if the matrix $M_{\mathcal{A}}$ has full row rank (i.e., if the rows of $M_{\mathcal{A}}$ are linearly independent.)
\end{definition}

If LICQ holds for $\mathcal{A}$, an explicit expression of the critical region exists. To see this, first note that if LICQ holds for $\mathcal{A}$ we get, by combining \eqref{eq:stat-AS} and \eqref{eq:feas-AS}, that the dual variable can be uniquely determined by 
\begin{equation}
    \label{eq:lambda}
    \lambda_{\mathcal{A}}(\theta) = - \left(M_{\mathcal{A}} M_{\mathcal{A}}^T\right)^{-1} d_{\mathcal{A}}(\theta).
\end{equation}
Moreover, \eqref{eq:stat-AS} then directly gives the optimal primal variable $u^*(\theta)$ as 
\begin{equation}
    \label{eq:u}
  u^*(\theta) = - M^T_{\mathcal{A}} \lambda_{\mathcal{A}}(\theta),
\end{equation}
and the corresponding primal slack $\mu_{\bar{\mathcal{A}}}(\theta)$ for the inactive constraints $\bar{\mathcal{A}}$ is 
\begin{equation}
    \label{eq:mu}
    \mu_{\bar{\mathcal{A}}}(\theta) = [d(\theta)]_{\bar{\mathcal{A}}}-[M]_{\bar{\mathcal{A}}} u^*(\theta).
\end{equation}

Since $d(\theta)$ is affine in $\theta$, we have that $\lambda_{\mathcal{A}}(\theta)$, $u^*(\theta)$ and $\mu_{\bar{\mathcal{A}}}(\theta)$ are also affine in $\theta$ and, hence, the critical region $\Theta_{\mathcal{A}}$ for the active set $\mathcal{A}$ is the polyhedron
\begin{equation}
  \label{eq:cA}
  \Theta_{\mathcal{A}}\triangleq\{\theta\in \Theta_0 : \mu_{\bar{\mathcal{A}}}(\theta) \geq 0, \lambda_{\mathcal{A}}(\theta) \geq 0 \}.
\end{equation}

Consequently, the explicit solution $u^*(\theta)$ to \eqref{eq:mpLDP} is the polyhedral piecewise-affine function
\begin{equation}
    \label{eq:exp-sol}
    u^*(\theta) = -M_{\mathcal{A}}^T(M_{\mathcal{A}} M^T_{\mathcal{A}})^{-1} d_{\mathcal{A}}(\theta),\quad \forall \theta \in \Theta_{\mathcal{A}}.
\end{equation}
\begin{remark}[Explicit solution to mpQP]
    Note that \eqref{eq:exp-sol} in combination with \eqref{eq:sol-transform} directly gives $x^*(\theta)$, and that the critical regions $\Theta_{\mathcal{A}}$ are \textit{the same}.
\end{remark}

The main challenge for determining the explicit solution in \eqref{eq:exp-sol} is to find the active sets that define the critical regions.
Formally, this corresponds to finding the set $\mathbb{A} \triangleq \{\mathcal{A} : \Theta_{\mathcal{A}} \neq \emptyset\}$. From a practical point of view, however, expressing the explicit solution only requires active sets that define critical regions that cover the parameter space. Therefore, active sets that break the LICQ can be discarded (see, for example, Lemma 1 in \cite{ahmadi2018combinatorial}, which ensures that active sets that break LICQ can be discarded, even if they define full-dimensional critical regions). In summary, finding the explicit solution \eqref{eq:mpLDP} can be formalized as 
\begin{tcolorbox}
\begin{problem}
    \label{prob:main}
    Find the set $\mathbb{A}^* = \mathbb{A}^{\text{LICQ}}$, where 
    \begin{equation*}
        \mathbb{A}^{\text{LICQ}} \triangleq \{\mathcal{A} : \Theta_{\mathcal{A}} \neq \emptyset \text{ and } \mathcal{A} \text{ satisfies LICQ}\}.
    \end{equation*}
\end{problem}
\end{tcolorbox}

\subsection{Existing methods to compute the explicit solution}
\label{ssec:review-methods}
Traditionally, Problem \ref{prob:main} has been tackled with \textit{geometrical} methods. These methods start in a critical region and explore all neighboring regions by moving in the parameter space $\mathbb{R}^{p}$ \cite{bemporad2002explicit,baotic2002efficient,tondel2003algorithm}. The most efficient geometrical methods exploit the ``facet-to-facet'' property \cite{spjotvold2006facet}, which  allow neighboring regions to be accessed from the facets of the current critical region. A major challenge for geometrical methods is that this ``facet-to-facet'' property does not always hold \cite{spjotvold2006facet}. Another challenge is that they employ geometrical operations such as computing points on lower-dimensional facets, which is a numerically unreliable operation \cite{herceg2015enumeration}. Therefore, geometrical methods themselves are often unreliable, especially when the dimension of the parameter space increases.

In contrast, \textit{combinatorial} methods do not explore the parameter space, but instead search directly for active sets that leads to non-empty critical regions. A naive combinatorial method would be to solve feasibility problems to see if $\Theta_{\mathcal{A}}\neq \emptyset$ \textit{for all possible} $2^m$ active sets. Since this would require an exponential number of feasibility problems to be solved, this quickly becomes intractable as the number of constraint $m$ increases. Instead of such a brute-force search, combinatorial methods use properties of the problem to dismiss $\mathcal{A}$ that cannot possibly be optimal based on previously tested active sets (known as \textit{fathoming}). The first work in this directions was \cite{gupta2011novel}, which used fathoming based on primal feasibility and LICQ violation to prune candidate active sets. Following this work, several works (e.g. \cite{feller2013improved, oberdieck2017explicit,ahmadi2018combinatorial,herceg2015enumeration})  have introduced additional fathoming strategies, which ultimately requires fewer feasibility problems to be solved. 

\section{A combinatorial connected-graph algorithm}
\label{sec:algorithm}
In this section, we propose a combinatorial method that solves Problem \ref{prob:main}; that is, a combinatorial method that computes the explicit solution to the mpLDP in \eqref{eq:mpLDP}. First, we introduce the concepts of active sets being \textit{geometrically} and \textit{combinatorially} adjacent. We then use these concepts to construct a simple algorithm that produces a solution $\mathbb{A}^*$ to Problem~1. Finally, we discuss the method's relationship with the connected-graph methods in \cite{herceg2015enumeration,ahmadi2018combinatorial}. The foundation for the proposed method is the concept of combinatorially adjacent active sets, introduced next, which complement the classical concept of geometrical adjacency of critical regions.

\subsection{Geometrical and combinatorial adjacency}

As mentioned in Section \ref{ssec:review-methods}, both geometrical methods \cite{bemporad2002explicit,tondel2003algorithm,grieder2004computation,jones2006multiparametric} and the connected-graph method in \cite{herceg2015enumeration} are based on the fact that critical regions are adjacent with other critical regions in the parameter space. Two critical regions are adjacent if they intersect, and we say that the corresponding active sets are geometrically adjacent.
\begin{definition}[Geometrical adjacency]
    Two active sets $\mathcal{A},\tilde{\mathcal{A}}\in \mathbb{A}$ are \textit{geometrically adjacent} if $\Theta_{\mathcal{A}} \cap \Theta_{\tilde{\mathcal{A}}} \neq \emptyset$. 
\end{definition}

Geometrical methods use this kind of adjacency to find the explicit solution by ``jumping'' between adjacent regions. Specifically, the methods compute the boundary (facets) of a critical region and then determine adjacent critical regions based on them. As is pointed out in \cite{herceg2015enumeration}, geometrical operations on facets are often numerically unstable.
Our goal is therefore to avoid any geometrical operations and instead consider adjacency purely in terms of the active sets. Intuitively, two active sets are adjacent if one of the sets can be transformed into the other by either adding or removing a single constraint (put in another way: that the \textit{Hamming distance} between the active sets is $1$). We define this type of adjacency as \textit{combinatorial} adjacency. 
\begin{definition}[Combinatorial adjacency]
    \label{def:comb-adj}
    Two active sets $\mathcal{A},\tilde{\mathcal{A}}\in \mathbb{A}$ are \textit{combinatorially adjacent} if $\mathcal{A} = \tilde{\mathcal{A}}\cup \{i\}$ or $\tilde{\mathcal{A}} = \mathcal{A}\cup \{i\}$ for some $i \in \mathbb{Z}_{1:m}$. 
\end{definition}

In the proposed algorithm, soon to be presented, candidate active sets are explored by jumping between combinatorially adjacent active sets. Specifically, such exploration will be done through a particular class of sequences of combinatorially adjacent active sets that we call  \textit{valid combinatorial sequences}.
\begin{definition}[Valid combinatorial sequence]
    \label{def:vcs}
    A sequence $\{\mathcal{A}_i\}_i^{N}$ with $\mathcal{A}_i \in \mathbb{A}$ is a \textit{valid combinatorial sequence} if for all $i = 1, \dots, N-1$%
    \begin{itemize}
        \item[(i)] $\mathcal{A}_i$ and $\mathcal{A}_{i+1}$ are combinatorially adjacent. 
        \item[(ii)] $\mathcal{A}_i \in \mathbb{A}^{\text{LICQ}}$ or $\mathcal{A}_{i+1} \in \mathbb{A}^{\text{LICQ}}$.
    \end{itemize}
\end{definition}

Point (i) in Definition \ref{def:vcs} simply states that the sequence should be ``connected'', while point (ii) makes sure that there is not a chain of degenerate active sets in the sequence. Point (ii) further implies that if LICQ breaks for a set in the sequence, only its subsets, and not supersets, can be the next set in a valid sequence. This will be exploited in the proposed method (specifically at Step \ref{step:licq-candidates} of Algorithm \ref{alg:updown}). 

We use valid combinatorial sequences to define \textit{combinatorial connectedness} between active sets in $\mathbb{A}$, which will use in Section \ref{ssec:algorithm} to explore active set candidates.

\begin{definition}[Combinatorially connected]
    \label{def:comb-connected}
    Two active sets $\mathcal{A}$ and $\tilde{\mathcal{A}}$ are \textit{combinatorially connected} if there exists a valid combinatorial sequence $\{\mathcal{A}_i\}_{i=1}^N$ with $\mathcal{A}_i \in \mathbb{A}$ such that $\mathcal{A}_1=\mathcal{A}$ and $\mathcal{A}_N=\tilde{\mathcal{A}}$. 
\end{definition}

Next, we show that there is a relationship between two active sets being geometrically adjacent and combinatorially connected.
\begin{lemma}[geometrical $\to$ combinatorial]
    \label{lem:geocomb}
    If two active sets $\mathcal{A},\tilde{\mathcal{A}} \in \mathbb{A}^{\text{LICQ}}$ are geometrically adjacent, they are combinatorially connected.
\end{lemma}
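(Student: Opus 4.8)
The plan is to connect two geometrically adjacent LICQ active sets $\mathcal{A}$ and $\tilde{\mathcal{A}}$ by walking through a single intermediate active set that "sits between" them. The natural candidate is $\mathcal{B} \triangleq \mathcal{A} \cap \tilde{\mathcal{A}}$, or perhaps the union $\mathcal{A} \cup \tilde{\mathcal{A}}$; I expect the intersection to be the right object, since at a point $\bar{\theta} \in \Theta_{\mathcal{A}} \cap \Theta_{\tilde{\mathcal{A}}}$ the constraints that are \emph{strictly} active for the common solution are exactly those forced by both active sets. First I would pick a point $\bar\theta$ in the (nonempty) intersection $\Theta_{\mathcal{A}} \cap \Theta_{\tilde{\mathcal{A}}}$ and observe that, by uniqueness of the primal solution to the strictly convex mpLDP, the primal point $\bar u = u^*(\bar\theta)$ is the same whether computed via $\mathcal{A}$ or via $\tilde{\mathcal{A}}$. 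Hence $\bar u$ satisfies all the equalities indexed by $\mathcal{A} \cup \tilde{\mathcal{A}}$, so $\bar\theta$ lies in $\Theta_{\mathcal{B}}$ for every active set $\mathcal{B}$ with $\mathcal{A}\cap\tilde{\mathcal{A}} \subseteq \mathcal{B} \subseteq \mathcal{A}\cup\tilde{\mathcal{A}}$ that is consistent with the dual sign conditions; in particular $\Theta_{\mathcal{A}\cap\tilde{\mathcal{A}}}\neq\emptyset$, so $\mathcal{A}\cap\tilde{\mathcal{A}}\in\mathbb{A}$.

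Next I would build the valid combinatorial sequence itself. Since $\mathcal{A}\cap\tilde{\mathcal{A}} \subseteq \mathcal{A}$, the two sets differ by $|\mathcal{A}| - |\mathcal{A}\cap\tilde{\mathcal{A}}|$ indices; I would remove these indices one at a time, obtaining a chain
\begin{equation*}
\mathcal{A} = \mathcal{C}_0 \supset \mathcal{C}_1 \supset \dots \supset \mathcal{C}_k = \mathcal{A}\cap\tilde{\mathcal{A}},
\end{equation*}
where consecutive sets are combinatorially adjacent by construction. Each $\mathcal{C}_j$ is a subset of $\mathcal{A}$, hence $M_{\mathcal{C}_j}$ is a row-submatrix of the full-row-rank matrix $M_{\mathcal{A}}$ and therefore also has full row rank, so every $\mathcal{C}_j$ satisfies LICQ. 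I also need $\mathcal{C}_j \in \mathbb{A}$: this follows because $\bar\theta \in \Theta_{\mathcal{C}_j}$ — at $\bar\theta$ the point $\bar u$ satisfies the equalities for $\mathcal{C}_j$ (a subset of those for $\mathcal{A}$), the remaining primal slacks are nonnegative (they are for $\mathcal{A}$, and we only dropped constraints), and the dual variables $\lambda_{\mathcal{C}_j}$ are obtained from $\lambda_{\mathcal{A}}$ by zeroing out the dropped multipliers, which preserves nonnegativity and stationarity only if those dropped multipliers were already zero at $\bar\theta$; this last point is where I must be careful, and I would instead argue directly that the reduced KKT system \eqref{eq:kkt-AS} for $\mathcal{C}_j$ is solvable at $\bar\theta$ by exhibiting the solution with multipliers recomputed via \eqref{eq:lambda} — continuity/consistency at $\bar\theta$ gives nonnegativity because $\bar u$ is the unique optimizer and the active set $\mathcal{C}_j$ correctly identifies a superset-free description there. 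Symmetrically, I build a chain from $\tilde{\mathcal{A}}$ down to $\mathcal{A}\cap\tilde{\mathcal{A}}$, and concatenate the two chains (reversing the second) to get a single sequence from $\mathcal{A}$ to $\tilde{\mathcal{A}}$ passing through $\mathcal{A}\cap\tilde{\mathcal{A}}$.

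Finally I would verify that this concatenated sequence is a \emph{valid} combinatorial sequence in the sense of Definition~\ref{def:vcs}: point (i) holds since each step adds or removes exactly one index, and point (ii) holds trivially because \emph{every} set in the sequence is in $\mathbb{A}^{\text{LICQ}}$ (all the $\mathcal{C}_j$ are subsets of $\mathcal{A}$ or of $\tilde{\mathcal{A}}$, and subsets of LICQ sets are LICQ). Therefore $\mathcal{A}$ and $\tilde{\mathcal{A}}$ are combinatorially connected.

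The main obstacle I anticipate is the claim that each intermediate set $\mathcal{C}_j$ actually lies in $\mathbb{A}$, i.e.\ that $\Theta_{\mathcal{C}_j}\neq\emptyset$. Removing a constraint from an optimal active set does not in general keep you feasible unless the corresponding multiplier was zero; the clean way around this is to peel off the indices in a specific order — namely, at the point $\bar\theta$, some of the constraints in $\mathcal{A}\setminus\tilde{\mathcal{A}}$ may have zero multiplier (weakly active) and can be dropped first, but one must show the \emph{whole} descent stays in $\mathbb{A}$. I would resolve this by not insisting the sequence live in a neighborhood of one fixed $\bar\theta$: instead, for each $\mathcal{C}_j$ I only need \emph{some} parameter where its reduced KKT system is solvable, and I would get this either from $\bar\theta$ together with a perturbation argument, or by appealing to the fact (used implicitly throughout the paper, cf.\ the discussion around Problem~\ref{prob:main} and Lemma~1 of \cite{ahmadi2018combinatorial}) that critical regions of LICQ active sets obtained by constraint removal from an optimal set are themselves nonempty. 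Pinning down exactly which of these justifications the authors intend is the crux of the argument.
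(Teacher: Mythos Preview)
Your approach has a genuine gap, and it is precisely the one you flag in your last paragraph: the intermediate sets $\mathcal{C}_j$ obtained by \emph{removing} indices from $\mathcal{A}$ need not lie in $\mathbb{A}$. The claim that $\bar\theta\in\Theta_{\mathcal{C}_j}$ (and even $\bar\theta\in\Theta_{\mathcal{A}\cap\tilde{\mathcal{A}}}$) is not justified. The point $\bar u$ does satisfy the equalities indexed by $\mathcal{C}_j$, but membership in $\Theta_{\mathcal{C}_j}$ requires a pair $(u^*,\lambda)$ satisfying the KKT system \eqref{eq:kkt-AS} for $\mathcal{C}_j$, in particular stationarity $u^*=-\sum_{i\in\mathcal{C}_j}M_i^T\lambda_i$ with $\lambda_i\geq 0$. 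If some $j\in\mathcal{A}\setminus\tilde{\mathcal{A}}$ has $\lambda_j(\bar\theta)>0$, then $\bar u$ cannot in general be written as a nonnegative combination of $\{-M_i^T\}_{i\in\mathcal{C}_j}$, and the LICQ solution $u^*_{\mathcal{C}_j}(\bar\theta)$ recomputed via \eqref{eq:lambda}--\eqref{eq:u} will differ from $\bar u$ and may violate the dropped constraint. Neither of your proposed patches works: a perturbation argument still leaves $\lambda_j>0$ in a neighborhood, and the ``fact'' that constraint removal from an optimal LICQ set yields a nonempty critical region is simply false (take $n=1$, the single constraint $u\le -1$: the optimal set $\{1\}$ has $\lambda_1=1$, while $\Theta_\emptyset=\emptyset$).

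The paper's proof goes in the opposite direction: it \emph{adds} indices from $\tilde{\mathcal{A}}\setminus\mathcal{A}$ first. Adding $k\in\tilde{\mathcal{A}}$ is always safe (Lemma~\ref{lem:add}), because at $\bar\theta$ the constraint $k$ is already tight at $\bar u$, so one can keep $(u^*,\lambda)=(\bar u,\lambda_{\mathcal{A}})$ and append $\lambda_k=0$. The price is that LICQ may break after such an addition; Lemma~\ref{lem:remove} then guarantees the existence of a specific $j\in\mathcal{A}\setminus\tilde{\mathcal{A}}$ whose removal restores LICQ while keeping the set in $\mathbb{A}^{\text{LICQ}}$. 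Interleaving these two moves (Algorithm~\ref{alg:vcs}) produces a sequence in which LICQ is never violated at two consecutive steps, so condition~(ii) of Definition~\ref{def:vcs} holds. Your descent-through-the-intersection route, even if the $\mathcal{C}_j$ were in $\mathbb{A}$, would give a sequence entirely inside $\mathbb{A}^{\text{LICQ}}$, which is stronger than needed---but the route is not available, whereas the ascent-with-repair route is.
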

\begin{proof}
    See Appendix.
\end{proof}

\begin{remark}[adjacency vs connectedness]
    Note that two active sets being geometrically adjacent does \textit{not} generally imply that they are combinatorially \textit{adjacent}, but it does imply that they are combinatorially \textit{connected}. An illustrative example of this distinction is given in Example~1 in \cite{spjotvold2006facet}. Geometrical adjacency does, however, imply combinatorial adjacency when no degeneracies occur, which follows directly from Theorem 2 in \cite{tondel2003algorithm}.
\end{remark}

Now we are ready to present the main theoretical results of this paper: namely, that any two active set that are optimal and satisfy LICQ are combinatorially connected. This theorem is the foundation of the correctness of the proposed method.
\begin{tcolorbox}
\begin{theorem}[Combinatorially connected graph]
    \label{th:main}
    Any pair $\mathcal{A},\tilde{\mathcal{A}} \in \mathbb{A}^{\text{LICQ}}$ is combinatorially connected.
\end{theorem}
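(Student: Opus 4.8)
The plan is to connect any two LICQ-feasible active sets by first reducing to the geometrically-adjacent case and then invoking Lemma~\ref{lem:geocomb}. Concretely, given $\mathcal{A}, \tilde{\mathcal{A}} \in \mathbb{A}^{\text{LICQ}}$, pick interior points $\theta_0 \in \Theta_{\mathcal{A}}$ and $\theta_1 \in \Theta_{\tilde{\mathcal{A}}}$ (or, if a critical region is lower-dimensional, any representative point). Consider the line segment $\theta(t) = (1-t)\theta_0 + t\theta_1$ for $t\in[0,1]$, which lies in $\Theta_0$ by convexity of the polyhedral parameter set. Since the critical regions $\{\Theta_{\mathcal{A}}\}_{\mathcal{A}\in\mathbb{A}^{\text{LICQ}}}$ form a polyhedral partition that covers the portion of $\Theta_0$ where the explicit solution is defined, the segment passes through a finite ordered sequence of critical regions $\Theta_{\mathcal{B}_1}, \Theta_{\mathcal{B}_2}, \dots, \Theta_{\mathcal{B}_K}$ with $\mathcal{B}_1 = \mathcal{A}$ and $\mathcal{B}_K = \tilde{\mathcal{A}}$, where consecutive regions $\Theta_{\mathcal{B}_j}$ and $\Theta_{\mathcal{B}_{j+1}}$ intersect (they share at least the crossing point $\theta(t_j)$), hence $\mathcal{B}_j$ and $\mathcal{B}_{j+1}$ are geometrically adjacent. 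By Lemma~\ref{lem:geocomb}, each consecutive pair is combinatorially connected, i.e.\ joined by a valid combinatorial sequence. Concatenating these sequences yields a single valid combinatorial sequence from $\mathcal{A}$ to $\tilde{\mathcal{A}}$, since the concatenation of valid combinatorial sequences that agree at the junction is again valid (both conditions in Definition~\ref{def:vcs} are preserved). This establishes that $\mathcal{A}$ and $\tilde{\mathcal{A}}$ are combinatorially connected.

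The main obstacle is making the "line segment crosses finitely many critical regions" argument fully rigorous, which requires a bit of care about degeneracies and lower-dimensional regions. One must ensure the segment can be chosen (by genericity, i.e.\ perturbing $\theta_0$, $\theta_1$, or the line itself within the full-dimensional regions) so that it only ever passes through full-dimensional critical regions except possibly at isolated crossing points, so that the notion of an ordered traversal is well-defined and each crossing gives a genuine geometric intersection of two regions in $\mathbb{A}^{\text{LICQ}}$. A subtlety is that at a crossing point the segment might meet a critical region whose active set violates LICQ; but by the remark following Problem~\ref{prob:main} (and Lemma~1 of \cite{ahmadi2018combinatorial}), the LICQ-feasible critical regions already cover the parameter space, so the segment can be decomposed into subsegments each lying in the closure of a single $\Theta_{\mathcal{B}_j}$ with $\mathcal{B}_j \in \mathbb{A}^{\text{LICQ}}$, and adjacent ones share a boundary point. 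If a clean traversal is hard to guarantee directly, an alternative is a compactness/covering argument: cover the segment by finitely many relatively open sets, each contained in some $\Theta_{\mathcal{B}}$, extract a finite subcover, and order it along the segment.

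A second, smaller point to verify is that the concatenation step genuinely preserves validity: if $\{\mathcal{A}_i\}_{i=1}^{N}$ and $\{\mathcal{A}'_i\}_{i=1}^{N'}$ are valid with $\mathcal{A}_N = \mathcal{A}'_1$, then the spliced sequence $\mathcal{A}_1,\dots,\mathcal{A}_N=\mathcal{A}'_1,\dots,\mathcal{A}'_{N'}$ satisfies conditions (i) and (ii) of Definition~\ref{def:vcs} at the junction as well, which is immediate since the junction pair $(\mathcal{A}_{N-1},\mathcal{A}_N)$ and $(\mathcal{A}'_1,\mathcal{A}'_2)$ already satisfy them. Also worth noting is that the endpoints $\mathcal{A}$ and $\tilde{\mathcal{A}}$ are in $\mathbb{A}^{\text{LICQ}}$ by hypothesis, so condition (ii) is automatically met for the first and last links regardless of what the neighboring sets in the sequence are. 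With these pieces in place, the theorem follows, and it is exactly the structural fact needed to justify that exploring combinatorially adjacent active sets (Algorithm~\ref{alg:updown}) reaches every region of the explicit solution.
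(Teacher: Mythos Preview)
Your proposal is correct and follows essentially the same route as the paper: establish that any two critical regions in $\mathbb{A}^{\text{LICQ}}$ can be linked by a chain of geometrically adjacent ones, then apply Lemma~\ref{lem:geocomb} to each link and concatenate. The only difference is that the paper simply cites the geometric connectedness of the critical-region partition as a well-known fact (no ``holes'' in the partition, \cite{baotic2002efficient}) rather than spelling out the line-segment traversal and the genericity/concatenation details you supply.
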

\end{tcolorbox}
\begin{proof}
    It is well-known that critical regions form a connected graph in the geometrical sense \cite{baotic2002efficient}, which follows directly from there being no ``holes'' in the partition that defines the explicit solution. Together with Lemma \ref{lem:geocomb}, this implies that the active sets that define the critical regions also form a connected graph in a combinatorial sense. 
\end{proof}

\begin{remark}[Theorem~\ref{th:main} and \cite{oberdieck2017explicit}]
    \label{rem:connected}
    A similar result to Theorem \ref{th:main} is presented in \cite{oberdieck2017explicit}, but their result is, as is pointed out in \cite{ahmadi2019explicit}, based on an incorrect premise that weakly active constraints cannot occur. Moreover, the result therein is more loosely proved in terms of ``dual simplex steps'', which is not as direct as the concept of combinatorial adjacency introduced in Definition \ref{def:comb-adj} that Theorem \ref{th:main} is based on.
\end{remark}
\begin{remark}[Lower-dimensional regions]
    Importantly, note that the critical region $\Theta_{\mathcal{A}}$ for any $\mathcal{A} \in \mathbb{A}$ might be lower-dimensional. This is central for connectedness in degenerate cases. As is highlighted by, e.g., Example~1 in \cite{ahmadi2018combinatorial}, restricting the exploration to full-dimensional critical regions does not lead to the desired connectedness in Theorem~\ref{th:main}. 
\end{remark}

\subsection{A combinatorial algorithm}
\label{ssec:algorithm}
We are now interested in using the insights from Theorem~\ref{th:main} to construct an algorithm that solves Problem \ref{prob:main}. That is, we are interested in constructing an algorithm that produces a collection of active sets $\mathbb{A}^*$ that coincides with $\mathbb{A}^{\text{LICQ}}$. 
We will do this by recursively generating combinatorially adjacent active sets for all active sets that define critical regions that are non-empty.

The proposed algorithm, given in Algorithm \ref{alg:updown}, considers an unexplored active set $\mathcal{A}$ in each iteration. If LICQ holds for $\mathcal{A}$, the corresponding region of the form  \eqref{eq:cA} is formed and a feasibility problem is solved to check whether $\Theta_{\mathcal{A}} \neq \emptyset$. If $\Theta_\mathcal{A}$ is non-empty, we add $\mathcal{A}$ to the set of discovered optimal active sets $\mathbb{A}^*$, and put all its unexplored combinatorially adjacent active sets on the  stack $S$ for further exploration. All combinatorially adjacent active sets to $\mathcal{A}$ are formed with the functions \textsc{ExploreSupersets}, which creates candidate active sets by adding an index to $\mathcal{A}$, and with the function \textsc{ExploreSubsets}, which creates candidate active sets by removing an index from $\mathcal{A}$. By forming all unexplored active sets that are combinatorially adjacent to $\mathcal{A}$, we will ensure (based on Theorem \ref{th:main}) that all active sets in $\mathbb{A}^{\text{LICQ}}$ are explored. If LICQ does not hold for $\mathcal{A}$, we only form combinatorially adjacent active sets by removing a constraints, since LICQ is guaranteed to be broken for combinatorially adjacent active sets that have more elements. This is sufficient, since, for a valid combinatorial sequence, LICQ does not break consecutively (cf. point (ii) in Definition \ref{def:vcs}.)
\begin{algorithm}[H]
    \caption{Combinatorial method for solving Problem \ref{prob:main}.}
  \label{alg:updown}
  \begin{algorithmic}[1]
      \Require $\Theta_0\subseteq \mathbb{R}^p$, $\mathcal{A}_0 \in \mathbb{A}^{\text{LICQ}}$, an mpLDP of the form \eqref{eq:mpLDP}
	\Ensure Collection of optimal active sets $\mathbb{A}^*$ over $\Theta$ 
	\State $S \leftarrow \{\mathcal{A}_0\}$,\quad $\mathcal{E}\leftarrow \{\mathcal{A}_0\}$
	\While{$S\neq \emptyset$}
	\State Pop $\mathcal{A}$ from $S$
	\If {LICQ satisfied for $\mathcal{A}$} 
	\State Compute $\lambda_{\mathcal{A}}(\theta)$ accodring to \eqref{eq:lambda}
    \State Compute $\mu_{\bar{\mathcal{A}}}(\theta)$ according to \eqref{eq:u} and \eqref{eq:mu}
	\State Form $\Theta_{\mathcal{A}}$ according to \eqref{eq:cA}
	\If{$\Theta_{\mathcal{A}} \neq \emptyset$}
	\State Add $\mathcal{A}$ to $\mathbb{A}^*$
	\State \textsc{ExploreSupersets}($\mathcal{A},\mathcal{E},S$)
	\State \textsc{ExploreSubsets}($\mathcal{A},\mathcal{E},S$)
	\EndIf
    \Else\:\:\textsc{ExploreSubsets}($\mathcal{A},\mathcal{E},S$)\label{step:licq-candidates}
	\Comment{{\footnotesize($\mathcal{A}$ violates LICQ)}}
	\EndIf
	\EndWhile
	\hrule\rule{0pt}{1pt}
	\Procedure{ExploreSupersets}{$\mathcal{A},\mathcal{E},S$}
	\For{$i \in \bar{\mathcal{A}}$}
	\State $\mathcal{A}^+ \leftarrow \mathcal{A} \cup \{i\}$
	\If{$\mathcal{A}^+\notin \mathcal{E}$} add $\mathcal{A}^+$ to $\mathcal{E}$ and $S$
	\EndIf
	\EndFor
	\EndProcedure
    \vspace{2pt}
	\hrule\rule{0pt}{1pt}
	\Procedure{ExploreSubsets}{$\mathcal{A},\mathcal{E},S$}
	\For{$i \in {\mathcal{A}}$}
	\State $\mathcal{A}^- \leftarrow \mathcal{A} \setminus \{i\}$
	\If{$\mathcal{A}^-\notin \mathcal{E}$} add $\mathcal{A}^-$ to $\mathcal{E}$ and $S$
	\EndIf
	\EndFor
	\EndProcedure

  \end{algorithmic}
\end{algorithm}

Since Algorithm \ref{alg:updown} produces all combinatorially adjacent active sets for any $\mathcal{A}\in \mathbb{A}^{\text{LICQ}}$, Definition \ref{def:comb-connected} and Theorem 1 directly guarantees correctness, in the sense that Algorithm~\ref{alg:updown} solves Problem \ref{prob:main}, formalized in the following corollary. 
\begin{corollary}[Correctness of Algorithm \ref{alg:updown}] The output of Algorithm~\ref{alg:updown} is $\mathbb{A}^* = \mathbb{A}^{\text{LICQ}}$. 
\end{corollary}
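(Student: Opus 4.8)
The plan is to establish both inclusions $\mathbb{A}^* \subseteq \mathbb{A}^{\text{LICQ}}$ and $\mathbb{A}^{\text{LICQ}} \subseteq \mathbb{A}^*$ separately, after first noting termination of the algorithm. For termination, observe that the explored set $\mathcal{E}$ only grows, is contained in the finite power set of $\mathbb{Z}_{1:m}$, and every set pushed onto $S$ is simultaneously added to $\mathcal{E}$ exactly once; since each iteration of the \textbf{while} loop pops one element and no element is ever re-added, the loop runs at most $2^m$ times.

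For the inclusion $\mathbb{A}^* \subseteq \mathbb{A}^{\text{LICQ}}$: a set $\mathcal{A}$ is added to $\mathbb{A}^*$ only inside the branch where LICQ is satisfied for $\mathcal{A}$ and where the feasibility check certifies $\Theta_{\mathcal{A}} \neq \emptyset$. These are exactly the two defining conditions of $\mathbb{A}^{\text{LICQ}}$ (recalling that $\Theta_{\mathcal{A}}$ as formed in \eqref{eq:cA} is precisely the critical region), so every element of the output lies in $\mathbb{A}^{\text{LICQ}}$.

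For the reverse inclusion $\mathbb{A}^{\text{LICQ}} \subseteq \mathbb{A}^*$, which is the crux: I would argue that at termination, $\mathcal{E} \cap \mathbb{A}^{\text{LICQ}} \subseteq \mathbb{A}^*$, i.e.\ every LICQ active set that ever gets explored is also detected as optimal and recorded. This holds because whenever such an $\mathcal{A}$ is popped, the LICQ branch executes, the feasibility problem is solved, $\Theta_{\mathcal{A}} \neq \emptyset$ holds by definition of $\mathbb{A}^{\text{LICQ}}$, and $\mathcal{A}$ is added to $\mathbb{A}^*$. It then remains to show that \emph{every} $\mathcal{A} \in \mathbb{A}^{\text{LICQ}}$ is eventually explored, i.e.\ $\mathbb{A}^{\text{LICQ}} \subseteq \mathcal{E}$ at termination. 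I would prove this by induction along a valid combinatorial sequence connecting $\mathcal{A}_0$ to an arbitrary target $\mathcal{A}^\star \in \mathbb{A}^{\text{LICQ}}$, which exists by Theorem~\ref{th:main}. The base case $\mathcal{A}_0 \in \mathcal{E}$ is the initialization. For the inductive step, suppose $\mathcal{A}_i$ has been explored; I must show $\mathcal{A}_{i+1}$ gets added to $\mathcal{E}$. Here the two properties of a valid combinatorial sequence (Definition~\ref{def:vcs}) are used: $\mathcal{A}_i$ and $\mathcal{A}_{i+1}$ are combinatorially adjacent, and at least one of them satisfies LICQ. If $\mathcal{A}_i \in \mathbb{A}^{\text{LICQ}}$, then since $\mathcal{A}_i$ was explored and (by the paragraph above) found optimal, both \textsc{ExploreSupersets} and \textsc{ExploreSubsets} were called on it, so all its combinatorial neighbours — including $\mathcal{A}_{i+1}$ — were added to $\mathcal{E}$. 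If instead $\mathcal{A}_i \notin \mathbb{A}^{\text{LICQ}}$, then $\mathcal{A}_i$ violates LICQ (it is still in $\mathbb{A}$, so feasibility is not the issue), so only \textsc{ExploreSubsets} runs on it; but then property (ii) forces $\mathcal{A}_{i+1} \in \mathbb{A}^{\text{LICQ}}$, hence $|\mathcal{A}_{i+1}| < |\mathcal{A}_i|$ (a superset of $\mathcal{A}_i$ would also violate LICQ), so $\mathcal{A}_{i+1} = \mathcal{A}_i \setminus \{j\}$ for some $j$, which is exactly what \textsc{ExploreSubsets} produces. Either way $\mathcal{A}_{i+1} \in \mathcal{E}$, completing the induction, and in particular $\mathcal{A}^\star \in \mathcal{E}$, hence $\mathcal{A}^\star \in \mathbb{A}^*$.

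The main obstacle is the case split in the inductive step, specifically making rigorous that a set which is explored but lies outside $\mathbb{A}^{\text{LICQ}}$ must fail \emph{because of} LICQ rather than feasibility — this requires noting that every set appearing in a valid combinatorial sequence is by definition in $\mathbb{A}$, so its critical region is non-empty and the only way it can miss $\mathbb{A}^{\text{LICQ}}$ is LICQ failure — and then correctly invoking property (ii) of Definition~\ref{def:vcs} to rule out the case $\mathcal{A}_{i+1} = \mathcal{A}_i \cup \{j\}$. Once that is pinned down, the rest is bookkeeping about which procedures are invoked on which popped sets.
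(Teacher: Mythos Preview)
Your proof is correct and follows essentially the same route as the paper, which simply states that correctness is immediate from Theorem~\ref{th:main} together with Definition~\ref{def:comb-connected}; you have carefully unpacked that one-line justification into a termination argument, the easy inclusion, and an induction along a valid combinatorial sequence for the hard inclusion. Your case split in the inductive step (distinguishing whether $\mathcal{A}_i$ satisfies LICQ, and using property~(ii) of Definition~\ref{def:vcs} to force $\mathcal{A}_{i+1}$ to be a subset when it does not) is exactly the mechanism the paper alludes to when it notes that ``if LICQ breaks for a set in the sequence, only its subsets, and not supersets, can be the next set in a valid sequence.''
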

\begin{remark}[Finding $\mathcal{A}_0$]
    To find an active set $\mathcal{A}_0 \in \mathbb{A}^{\text{LICQ}}$ to initialize Algorithm \ref{alg:updown} with, one can use a QP solver, such as \texttt{DAQP} \cite{arnstrom2022daqp}, and solve \eqref{eq:rmpc-mpqp} for a given parameter $\theta \in \Theta_0$. Another possibility is to initially perform a combinatorial exploration akin to the method in \cite{gupta2011novel}. 
\end{remark}

\subsection{Comparison with similar methods}
\label{ssec:compare}

As previously mentioned, the proposed method given in Algorithm \ref{alg:updown} is related to the methods in \cite{oberdieck2017explicit} and \cite{ahmadi2018combinatorial}. To highlight the contributions of this paper, we will now delineate some important differences between \cite{oberdieck2017explicit,ahmadi2018combinatorial} and Algorithm~\ref{alg:updown}.

The connected-graph approach presented in \cite{oberdieck2017explicit} use Theorem 2 in \cite{tondel2003algorithm} to characterize the facets of a critical regions and use this to generate new active set candidates. As previously mentioned, geometrical operations that involve facets are often numerically unstable. Moreover, to obtain the facets, redundant half-planes need to be removed from the critical regions, which lead to the main computational burden (as is emphasized in \cite{tondel2003algorithm} and reported in Figure 6 of \cite{oberdieck2017explicit}). In contrast, Algorithm~\ref{alg:updown} does not need to characterize the facets of each critical region, making it more numerically robust and efficient (supported by the results in Section \ref{sec:result}.) Moreover, the proof of the correctness of the method in \cite{oberdieck2017explicit} is based on false premises in degenerate cases, see Remark~\ref{rem:connected}, while Algorithm~\ref{alg:updown} is based on Theorem \ref{th:main} that still holds for degenerate problems.

In \cite{ahmadi2018combinatorial}, degeneracies are handled by detecting constraints that are weakly active/inactive. This requires feasibility problems of the form  
\begin{equation}
    \label{eq:infeasibility-lifted}
  \begin{aligned}
	&\underset{t,\theta\in\Theta_0, \lambda, \mu}{\text{minimize}}&& t \\
    &\text{subject to} && M_{\mathcal{A}}M_{\mathcal{A}}^T \lambda = d_{\mathcal{A}}(\theta),\qquad \qquad \lambda \geq t,  \\
    & && \mu =d_{\bar{\mathcal{A}}}(\theta) + M_{\bar{\mathcal{A}}} M_{\mathcal{A}}^T \lambda,\qquad \:\mu \geq t. 
  \end{aligned}
\end{equation}
to be solved, where $t=0$ signifies a degenerate case with weakly active/inactive constraints. 
Instead of identifying weakly inactive/active constraints as in \cite{ahmadi2018combinatorial}, the proposed method exploit that there always exist a sequence of active sets corresponding to critical regions (possibly lower-dimensional) that are non-empty that connect any two full-dimensional critical regions. The existence of such lower-dimensional critical regions is hinted at in Example 1 in \cite{ahmadi2018combinatorial}, but is never proved nor exploited therein.   

Since Algorithm~\ref{alg:updown} does not have to detect weakly active/inactive constraints, only simple LDPs of the form    
\begin{equation}
    \label{eq:infeasibility-reduced}
    \min_{\theta\in \Theta_{\mathcal{A}}} \|\theta\|^2_2
\end{equation}
need to solved in the proposed algorithm. As a result, the dual active-set solver \texttt{DAQP} \cite{arnstrom2022daqp} can be used to efficiently check feasibility for $\Theta_{\mathcal{A}}$. As is illustrated in the results in Section \ref{sec:result}, this leads to a significant speedup.

\begin{remark}[Dimension of feasibility problems]
    The feasibility problems that need to be solved in \cite{ahmadi2018combinatorial} of the form \eqref{eq:infeasibility-lifted} are carried out over $1+p+2m$ dimensions ($t,\theta,\lambda,\mu$). In contrast, the feasibility problems that need to be solved in Algorithm~\ref{alg:updown} of the form \eqref{eq:infeasibility-reduced} are carried out over $p$ dimensions ($\theta$).
\end{remark}

Another advantage of the straightforward degeneracy handling in Algorithm~\ref{alg:updown} is that the algorithm itself is a lot simpler than the proposed algorithm in \cite{ahmadi2018combinatorial}. This makes it easier to implement efficiently.

\section{Numerical Experiments}
\label{sec:result}
To illustrate the efficacy of Algorithm~\ref{alg:updown}, we compare a Julia implementation of it\footnote{\url{https://github.com/darnstrom/ParametricDAQP.jl}} with the state-of-the-art mpQP solvers in \texttt{MPT} \cite{herceg2013mpt} and \texttt{POP} \cite{oberdieck2016pop}. For \texttt{MPT}, we use its implementation of the geometrical method presented in \cite{jones2006multiparametric}. For \texttt{POP}, we use its implementation of the connected-graph method in \cite{oberdieck2017explicit}. Experiments\footnote{\url{https://github.com/darnstrom/cdc24-mpqp}} were carried out on the test set provided in \cite{oberdieck2016pop}, which consists of 100 mpQPs, for which the solvers were tasked to compute the explicit solution; for more information about the test set, see \cite{oberdieck2016pop}. To limit the total execution time for the entire problem set, we terminate a solver after 100 seconds if it has been unable to return a solution up until then. 

The results are reported in Figure \ref{fig:sol-curve}, where the implementation of Algorithm \ref{alg:updown} displays a speedup of about two orders of  magnitude compared with both \texttt{MPT} and \texttt{POP}. For a fair comparison, we tried several internal LP solvers in both \texttt{MPT} and \texttt{POP}. For \texttt{MPT}, the open-source solver \texttt{GLPK} gave the best performance. For \texttt{POP}, the proprietary solver \texttt{CPLEX} gave the best performance. As presented in Section~\ref{ssec:compare}, the proposed method use the open-source solver \texttt{DAQP} \cite{arnstrom2022daqp} for solving feasibility problems of the form \eqref{eq:infeasibility-reduced}.
Note that the execution times for the proposed method includes forming and storing the explicit solution for each active set in $\mathbb{A}^*$. 

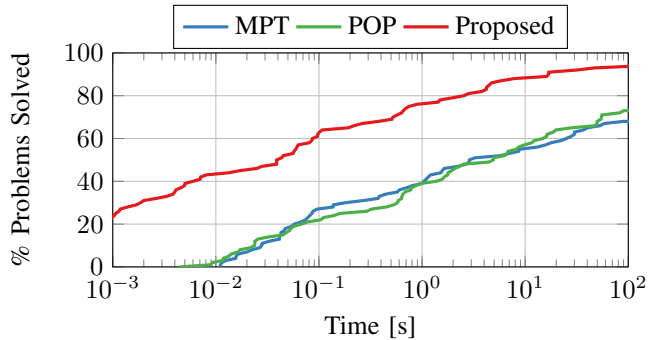
\begin{figure}
    \centering
    \begin{tikzpicture}
    \begin{axis}[
        xmode = log,
        xmin = 0.001,xmax = 100,
        ymin=0,ymax = 100,
        xlabel={Time [s]},
        ylabel={\% Problems Solved},
		legend style={at ={(0.5,1.225)},anchor=north}, ymajorgrids,xmajorgrids,
		legend cell align={left},legend columns=3,
		x post scale=1,
		y post scale=0.5,
        ]
        \addplot [set19c2,very thick] table [y expr=\coordindex, x={time}] {\mptglpk}; 
        \addplot [set19c3,very thick] table [y expr=\coordindex, x={time}] {\POP}; 
        \addplot [set19c1,very thick] table [y expr=\coordindex, x={time}] {\cholrank}; 
        \legend{ MPT, POP, Proposed} 
    \end{axis}
\end{tikzpicture}
    \caption{\small Time taken for \texttt{MPT} \cite{herceg2013mpt}, \texttt{POP} \cite{oberdieck2017explicit}, and a Julia implementation of Algorithm \ref{alg:updown} (``Proposed''), to compute the explicit solution for the benchmark mpQP problems from the \texttt{POP} toolbox \cite{oberdieck2016pop}.}
    \label{fig:sol-curve}
\end{figure}

\section{Conclusion}
We have proposed a combinatorial method for computing explicit solutions to multi-parametric quadratic programs. The method builds on optimal active sets being ``combinatorially connected'', which makes the explicit solution form a combinatorially connected graph. We show that an implementation of the proposed method can yield a speedup of two orders of magnitude compared to state-of-the-art software packages such as \texttt{MPT} and \texttt{POP}. 

{Future work include presenting details of how to implement Algorithm \ref{alg:updown} efficiently, and to develop a parallelized version of it.}

\bibliographystyle{IEEEtran}
\linespread{1.0}\selectfont
\bibliography{lib.bib}

\appendix

\begin{lemma}
    \label{lem:add}
    Assume that $\mathcal{A}$ and $\tilde{\mathcal{A}}$ are geometrically adjacent. Then for any $k \in \tilde{\mathcal{A}}$ the set $\Theta_{\mathcal{A} \cup \{k\}} \neq \emptyset$. 
\end{lemma}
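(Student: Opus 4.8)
The plan is to exhibit a single explicit parameter that lies in $\Theta_{\mathcal{A}\cup\{k\}}$, thereby showing the set is non-empty. Since $\mathcal{A}$ and $\tilde{\mathcal{A}}$ are geometrically adjacent, I would start by picking $\bar\theta \in \Theta_{\mathcal{A}} \cap \Theta_{\tilde{\mathcal{A}}}$. The first observation I would establish is that the primal optimizer of \eqref{eq:mpLDP} at $\theta = \bar\theta$ is unique: the objective $\tfrac12\|u\|_2^2$ is strictly convex and, as noted after \eqref{eq:kkt}, the KKT conditions are necessary and sufficient, so any $u^*$ satisfying \eqref{eq:kkt-AS} for any active set at $\bar\theta$ is the global minimizer of the strictly convex problem. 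Hence the $u^*$ witnessing $\bar\theta \in \Theta_{\mathcal{A}}$ and the $u^*$ witnessing $\bar\theta \in \Theta_{\tilde{\mathcal{A}}}$ are the same vector, which I will call $\bar u$.

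Next I would extract two facts at $\bar\theta$. From $\bar\theta \in \Theta_{\mathcal{A}}$ and \eqref{eq:kkt-AS}, there is a multiplier vector $\bar\lambda$ with $\bar\lambda_{\bar{\mathcal{A}}} = 0$, $\bar\lambda_{\mathcal{A}} \geq 0$, $\bar u + M^T \bar\lambda = 0$, $M_{\mathcal{A}} \bar u = d_{\mathcal{A}}(\bar\theta)$, and $M_{\bar{\mathcal{A}}}\bar u \leq d_{\bar{\mathcal{A}}}(\bar\theta)$. From $\bar\theta \in \Theta_{\tilde{\mathcal{A}}}$ together with $k \in \tilde{\mathcal{A}}$, the $k$-th constraint is active at the common optimum, i.e. $M_k \bar u = d_k(\bar\theta)$.

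Then I would simply verify that the pair $(\bar u, \bar\lambda)$ satisfies the active-set KKT system \eqref{eq:kkt-AS} for the active set $\mathcal{A} \cup \{k\}$ at $\theta = \bar\theta$. If $k \in \mathcal{A}$ this is immediate because $\mathcal{A}\cup\{k\} = \mathcal{A}$. If $k \notin \mathcal{A}$, the stationarity equation is unchanged since $\bar\lambda_k = 0$; the equality block becomes $M_{\mathcal{A}\cup\{k\}}\bar u = d_{\mathcal{A}\cup\{k\}}(\bar\theta)$, which holds by the two facts above; $\bar\lambda_{\mathcal{A}\cup\{k\}} \geq 0$ holds because $\bar\lambda_{\mathcal{A}} \geq 0$ and $\bar\lambda_k = 0$; and the inactive block for $\overline{\mathcal{A}\cup\{k\}}$ is a sub-block of the already-verified inequalities $M_{\bar{\mathcal{A}}}\bar u \leq d_{\bar{\mathcal{A}}}(\bar\theta)$ with the corresponding multipliers still zero. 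Therefore $\bar\theta \in \Theta_{\mathcal{A}\cup\{k\}}$, which gives the claim.

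The only delicate step is the first one: arguing that the primal solution seen through $\mathcal{A}$ and the one seen through $\tilde{\mathcal{A}}$ are literally the same vector $\bar u$. This is exactly where strict convexity of the least-distance objective enters; without uniqueness of the primal optimizer the two active sets could be associated with different minimizers and the activity statement $M_k\bar u = d_k(\bar\theta)$ would not transfer. Everything after that is routine bookkeeping on the blocks of the KKT system, so I do not expect further obstacles.
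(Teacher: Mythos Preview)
Your argument is correct and follows essentially the same route as the paper: pick a parameter $\bar\theta$ in $\Theta_{\mathcal{A}}\cap\Theta_{\tilde{\mathcal{A}}}$, reuse the optimizer and multipliers from $\mathcal{A}$ with $\lambda_k=0$, and verify the active-set KKT system for $\mathcal{A}\cup\{k\}$. The one point you make explicit that the paper leaves implicit is the use of strict convexity to conclude that the primal optimizers associated with $\mathcal{A}$ and $\tilde{\mathcal{A}}$ coincide at $\bar\theta$, which is indeed what makes $M_k\bar u=d_k(\bar\theta)$ transfer.
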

\begin{proof}
    Since $\Theta_\mathcal{A}\neq \emptyset$ we have that the KKT-conditions in \eqref{eq:kkt-AS} are satisfied for $\mathcal{A}$. Then due to $\mathcal{A}$ and $\tilde{\mathcal{A}}$ being geometrically adjacent, the KKT-conditions for $\mathcal{A}\cup \{k\}$ will also be satisfied with the same optimizer $u^*$ and multipliers on $\Theta_{\mathcal{A}} \cap \Theta_{\tilde{\mathcal{A}}}$, with the additional element $\lambda_k = 0$ for the dual variable. As a result $\Theta_{\mathcal{A} \cup \{k\}}\neq \emptyset$.
\end{proof}

\begin{lemma}
    \label{lem:remove}
Assume $\mathcal{A}$ and  $\tilde{\mathcal{A}}$ are geometrically adjacent. Moreover, assume that for some $i\in \tilde{\mathcal{A}}$ the set $\mathcal{A}\cup \{i\}$ violates LICQ. Then there exists $j \in \mathcal{A} \setminus \tilde{\mathcal{A}}$ such that $\mathcal{A}\cup\{i\}\setminus\{j\} \in \mathbb{A}^{\text{LICQ}}$. 
\end{lemma}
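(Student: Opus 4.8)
The plan is to produce the required $j$ by a single basis-exchange (simplex-type) pivot on the multipliers, performed at a point shared by the two critical regions. Fix $\theta' \in \Theta_{\mathcal{A}}\cap\Theta_{\tilde{\mathcal{A}}}$, which is nonempty by geometrical adjacency, and let $u^*$ be the (unique, since $H\succ 0$) optimizer at $\theta'$. Since $\theta'$ lies in both regions, $\mathcal{A}$ and $\tilde{\mathcal{A}}$ are optimal active sets there, with multipliers $\lambda^{\mathcal{A}}\geq 0$ supported on $\mathcal{A}$ and $\lambda^{\tilde{\mathcal{A}}}\geq 0$ supported on $\tilde{\mathcal{A}}$ satisfying $u^* = -M_{\mathcal{A}}^T\lambda^{\mathcal{A}} = -M_{\tilde{\mathcal{A}}}^T\lambda^{\tilde{\mathcal{A}}}$, $M_{\mathcal{A}\cup\{i\}}u^* = d_{\mathcal{A}\cup\{i\}}(\theta')$, and $Mu^* \leq d(\theta')$. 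Because $\mathcal{A}$ satisfies LICQ while $\mathcal{A}\cup\{i\}$ does not, $M_i^T$ lies in the row space of $M_{\mathcal{A}}$; write $M_i^T = M_{\mathcal{A}}^T c$. Some coordinate $c_j$ with $j\in\mathcal{A}\setminus\tilde{\mathcal{A}}$ is nonzero, for otherwise $M_i^T\in\operatorname{span}\{M_k^T : k\in\mathcal{A}\cap\tilde{\mathcal{A}}\}$, contradicting LICQ of $\tilde{\mathcal{A}}$ (as $\{i\}\cup(\mathcal{A}\cap\tilde{\mathcal{A}})\subseteq\tilde{\mathcal{A}}$); in particular $\mathcal{A}\setminus\tilde{\mathcal{A}}\neq\emptyset$.

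Fix any such $j$ and set $\mathcal{B}\triangleq(\mathcal{A}\cup\{i\})\setminus\{j\}$. Since $c_j\neq 0$ one may rewrite $M_j^T$ via $M_i^T$ and the remaining rows, so $\operatorname{span}M_{\mathcal{B}}^T = \operatorname{span}M_{\mathcal{A}}^T$, of dimension $|\mathcal{A}| = |\mathcal{B}|$; hence $\mathcal{B}$ satisfies LICQ. Also $u^*\in\operatorname{span}M_{\mathcal{A}}^T = \operatorname{span}M_{\mathcal{B}}^T$ and $M_{\mathcal{B}}u^* = d_{\mathcal{B}}(\theta')$, so $u^*$ is precisely the primal solution \eqref{eq:u} of active set $\mathcal{B}$ at $\theta'$, and the inactive-constraint feasibility $M_{\bar{\mathcal{B}}}u^*\leq d_{\bar{\mathcal{B}}}(\theta')$ is inherited for free from $Mu^*\leq d(\theta')$. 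Thus $\theta'\in\Theta_{\mathcal{B}}$, i.e.\ $\mathcal{B}\in\mathbb{A}^{\text{LICQ}}$, as soon as its multiplier $\lambda^{\mathcal{B}}\geq 0$, where $\lambda^{\mathcal{B}}$ is the unique vector supported on $\mathcal{B}$ with $u^* + M_{\mathcal{B}}^T\lambda^{\mathcal{B}} = 0$. The null space of $M_{\mathcal{A}\cup\{i\}}^T$ is spanned by $\delta$ with $\delta_i = 1$ and $\delta_{\mathcal{A}} = -c$, so the multipliers consistent with stationarity on $\mathcal{A}\cup\{i\}$ at $\theta'$ form the ray $\lambda^{\mathcal{A}} + \tau\delta$, $\tau\geq 0$, and $\lambda^{\mathcal{B}}$ is the one whose $j$-th coordinate vanishes; $\lambda^{\mathcal{B}}\geq 0$ is then exactly the simplex ratio-test condition for the pivot that brings $i$ into the basis.

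The crux is to show the leaving index of that pivot can be chosen in $\mathcal{A}\setminus\tilde{\mathcal{A}}$ — merely finding \emph{some} subset of $\mathcal{A}\cup\{i\}$ lying in $\mathbb{A}^{\text{LICQ}}$ is easy, but it need not have the shape $\mathcal{A}\cup\{i\}\setminus\{j\}$ with $j\notin\tilde{\mathcal{A}}$. Here I would exploit $\lambda^{\tilde{\mathcal{A}}}$: it is nonnegative, vanishes on all of $\mathcal{A}\setminus\tilde{\mathcal{A}}$, yet is supported in part on $\{i\}$, so travelling inside the (convex) set of optimal multipliers at $\theta'$ from $\lambda^{\mathcal{A}}$ toward $\lambda^{\tilde{\mathcal{A}}}$ must force a coordinate in $\mathcal{A}\setminus\tilde{\mathcal{A}}$ to zero no later than any coordinate in $\mathcal{A}\cap\tilde{\mathcal{A}}$; a comparison of this movement with the pivot ray $\lambda^{\mathcal{A}}+\tau\delta$ then identifies such a coordinate as an admissible leaving index $j$, and with it $\mathcal{A}\cup\{i\}\setminus\{j\}\in\mathbb{A}^{\text{LICQ}}$. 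Everything else is routine linear algebra together with Lemma~\ref{lem:add} and the optimality conditions \eqref{eq:kkt-AS}; the care goes into the ratio-test bookkeeping and the degenerate sub-cases (a zero multiplier already present on $\mathcal{A}\setminus\tilde{\mathcal{A}}$, an unbounded pivot ray, or $i$ being only weakly active for $\tilde{\mathcal{A}}$), which is where the hypothesis of geometrical adjacency — and not merely $\Theta_{\mathcal{A}\cup\{i\}}\neq\emptyset$ — has to be used in full.
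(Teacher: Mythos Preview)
Your proposal takes a different and more careful route than the paper. The paper first reduces to the case $|\mathcal{A}|=n$ (handling $|\mathcal{A}|<n$ by a QR-based dimension reduction onto the affine subspace $\{u:M_{\mathcal{A}}u=d_{\mathcal{A}}(\tilde\theta)\}$): in that case any $n$ linearly independent active constraints pin down the same primal point $u^*$, so once it is shown that some $j\in\mathcal{A}\setminus\tilde{\mathcal{A}}$ has $c_j\neq 0$ (which follows from LICQ of $\tilde{\mathcal{A}}$, exactly as you argue), the paper simply declares $\mathcal{B}=\mathcal{A}\cup\{i\}\setminus\{j\}\in\mathbb{A}^{\text{LICQ}}$ on the grounds that $u^*$ is optimal at $\tilde\theta$. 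It does \emph{not} separately verify $\lambda^{\mathcal{B}}\geq 0$. You, by contrast, work in general dimension directly and isolate dual feasibility as the real obstruction, casting the choice of $j$ as a simplex ratio-test pivot on the one-dimensional family $\lambda^{\mathcal{A}}+\tau\delta$.

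That identification is apt, but the step you yourself call ``the crux'' is only sketched, and the sketch does not close. The convex path $(1-s)\lambda^{\mathcal{A}}+s\lambda^{\tilde{\mathcal{A}}}$ has support on $\mathcal{A}\cup\tilde{\mathcal{A}}$, whereas the pivot ray $\lambda^{\mathcal{A}}+\tau\delta$ has support on $\mathcal{A}\cup\{i\}$; there is no linear relation between them that respects stationarity, so the ``comparison'' you invoke is not well-defined. Concretely, a coordinate $k\in\mathcal{A}\cap\tilde{\mathcal{A}}$ can stay strictly positive along the convex path (since $\lambda^{\tilde{\mathcal{A}}}_k\geq 0$) while being the first to vanish on the pivot ray whenever $c_k>0$ is large relative to $\lambda^{\mathcal{A}}_k$; in that situation the ratio-test leaving index lies in $\mathcal{A}\cap\tilde{\mathcal{A}}$, not in $\mathcal{A}\setminus\tilde{\mathcal{A}}$, and neither of your two objects rules this out. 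So the argument, as written, does not produce the required $j$. The paper sidesteps this via the $|\mathcal{A}|=n$ reduction---though one may note that its final sentence is equally silent on why $\lambda^{\mathcal{B}}\geq 0$ should hold for the chosen $j$.
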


\begin{proof}
    We will prove the case when $|\mathcal{A}|=n$ in detail; if $|\mathcal{A}|< n$ the same arguments hold by viewing the situation in a lower-dimensional affine subspace (see the end of the proof for some details). Assuming that $|\mathcal{A}|=n$, the optimizer $u^*$ is constrained to a single point in $\mathbb{R}^n$, and $\mathcal{A},\tilde{\mathcal{A}} \in \mathbb{A}^{\text{LICQ}}$ being geometrically adjacent ensures that such an optimizer $u^*$ exists for some parameter in $\tilde{\theta}\in \Theta_{{\mathcal{A}}}\cap\Theta_{\tilde{\mathcal{A}}}$.  If $i\in\tilde{\mathcal{A}}$ is added to $\mathcal{A}$, the LICQ will break, yet from Lemma \ref{lem:add} we have that $\mathcal{A}\cup\{i\}$ is still optimal for $\tilde{\theta}$. Since LICQ breaks, there exists a subset of constraint normals in $\mathcal{A}$ that is linearly dependent to $M_i$. Moreover, this subset does not include any constraints in $\mathcal{A}\cap \tilde{\mathcal{A}}$, since $i\in\tilde{\mathcal{A}}\in \mathbb{A}^{\text{LICQ}}$ implies that $M_i$ cannot be linearly dependent to any  $\{M_j\}_{j\in \tilde{\mathcal{A}}\setminus\{i\}}$. Therefore there exists some $j \in \mathcal{A}\setminus \tilde{\mathcal{A}}$ which makes the set of vectors $\{M_k\}_{k\in{\mathcal{A}\cup\{i\}\setminus\{j\}}}$ linearly independent. Imposing the constraint in $\mathcal{A}\cup\{i\}\setminus\{j\}$ will restrict $u^*$ to the same point as for $\mathcal{A}$, since $|\mathcal{A}\cup\{i\}\setminus\{j\}|=n$. Since $u^*$ was optimal for $\tilde{\theta} \in \Theta_{\mathcal{A}}\cap\Theta_{\tilde{\mathcal{A}}}$ we concluded that $\mathcal{A}\cup\{i\}\setminus\{j\}\in\mathbb{A}^{\text{LICQ}}.$  

If $|\mathcal{A}|<n$, the same arguments can be applied but on the affine subspace $\{u\in \mathbb{R}^n : M_{\mathcal{A}} u = d_{\mathcal{A}}(\tilde{\theta})\}$ after reduction to a lower dimension through a QR decomposition of $M_{\mathcal{A}}$ (see, for exampel, \cite[\S 15.3]{nocedal}.)
\end{proof}

\subsection{Proof Lemma \ref{lem:geocomb}}
\begin{proof}
    Based on Lemma \ref{lem:add} and Lemma \ref{lem:remove}, Algorithm~\ref{alg:vcs} presented below will be executable and form a valid combinatorial sequence with endpoints $\mathcal{A}$ and $\tilde{\mathcal{A}}$.
    \begin{algorithm}[H]
      \caption{}
      \label{alg:vcs}
      \begin{algorithmic}[1]
          \Require $\mathcal{A}$, $\tilde{\mathcal{A}} \in \mathbb{A}^{\text{LICQ}}$
          \Ensure {\small A valid combinatorial sequence with end points $\mathcal{A}$, $\tilde{\mathcal{A}}$} 
          \State  $\mathcal{A}^{-} \leftarrow  \mathcal{A}\setminus \tilde{\mathcal{A}}$;\qquad $\mathcal{A}^{+} \leftarrow  \tilde{\mathcal{A}}\setminus \mathcal{A}$
          \State $\mathcal{A}_1 \leftarrow \mathcal{A}$; \quad $k \leftarrow 1$
          \While {$\mathcal{A}^+ \neq \emptyset$ and $\mathcal{A}^- \neq \emptyset$}
          \If{$\mathcal{A}_k$ satisfy LICQ and $\mathcal{A}^+ \neq \emptyset$}
          \State $i \leftarrow$ pop from $\mathcal{A}^+$
          \State $\mathcal{A}_{k+1} \leftarrow \mathcal{A}_k \cup \{i\} $  \label{step:add-update}
          \Else
          \State $i \leftarrow$ select $i\in \mathcal{A}^{-}$ such that $\mathcal{A}_k \setminus \{i\} \in \mathbb{A}^{\text{LICQ}}$ \label{step:select-i}
          \State $\mathcal{A}^- \leftarrow \mathcal{A}^- \setminus \{i\} $ 
          \State $\mathcal{A}_{k+1} \leftarrow \mathcal{A}_k \setminus \{i\} $ 
          \EndIf
          \State $k \leftarrow k+1$
          \EndWhile
          \State \textbf{return} $\{\mathcal{A}_j\}_{j=1}^k$
      \end{algorithmic}
    \end{algorithm}

    Concretely, Lemma \ref{lem:add} ensures that $\mathcal{A}_{k+1}$ in Step \ref{step:add-update} of Algorithm \ref{alg:vcs} is in $\mathbb{A}$, and Lemma \ref{lem:remove} ensures that there will always exist a valid $i$ in Step \ref{step:select-i} of Algorithm \ref{alg:vcs} such that $\mathcal{A}_{k+1}$ returns to $\mathbb{A}^{\text{LICQ}}$.
\end{proof}
\end{document}